\newenvironment{proof1}
		{{\sc Proof of Theorem~\ref{main1}.}}
	{{\sc q.e.d.} \\}
\newenvironment{proof2}
		{{\sc Proof of Theorem~\ref{main2}.}}
	{{\sc q.e.d.} \\}
\newcommand{\abs}[1]{\left\vert#1\right\vert}
\newcommand{\R}{\mathbb R}
\newcommand{\tu}{\tilde{u}}
\newcommand{\ra}{\rangle}
\newcommand{\la}{\langle}
\newtheorem{theorem}{Theorem}
\newtheorem{lemma}[theorem]{Lemma}
\newtheorem{proposition}[theorem]{Proposition}
\theoremstyle{definition}
\newtheorem{remark}[theorem]{Remark}
\begin{document}

\begin{center}
\Large{A Liouville-type theorem and Bochner formula for harmonic maps into metric spaces}
\end{center}

\vspace*{0.05in}

\begin{center}
Brian Freidin and Yingying Zhang
\end{center}

\vspace*{0.1in}

\begin{quote} 
\emph{Abstract.} We study analytic properties of harmonic maps from Riemannian polyhedra into CAT($\kappa$) spaces for $\kappa\in\{0,1\}$. Locally, on each top-dimensional face of the domain, this amounts to studying harmonic maps from smooth domains into CAT($\kappa$) spaces. We compute a target variation formula that captures the curvature bound in the target, and use it to prove an $L^p$ Liouville-type theorem for harmonic maps from admissible polyhedra into convex CAT($\kappa$) spaces. Another consequence we derive from the target variation formula is the Eells-Sampson Bochner formula for CAT(1) targets.
\let\thefootnote\relax\footnote{2010 Mathematics Subject Classification 53C43}
 \end{quote}

\section{Introduction}

The study of harmonic maps between and into singular spaces has received a lot of attention since the early '90s. The study began in \cite{gromov-schoen} when Gromov and Schoen developed harmonic map theory into simplicial complexes in order to study non-archimedian superrigidity. In \cite{korevaar-schoen} Korevaar and Schoen constructed harmonic maps into arbitrary CAT(0) targets as a boundary value problem. The study was further extended to domains of Riemannian polyhedra, e.g. in (\cite{chen},\cite{daskal-mese1},\cite{daskal-mese2},\cite{eells-fuglede}), as well as arbitrary metric measure spaces, e.g. in (\cite{jost1},\cite{jost2}).

For a more complete survey of the theory of harmonic maps between singular spaces, we refer the reader to the above references. For our purposes, we will consider as domains admissible Riemannian polyhedra. By this we mean connected, dimensionally homogeneous simplicial complexes with a Riemannian metric on each top-dimensional face, such that the complement of the codimension two skeleton is still connected. More details about such spaces and their applications can be found in  \cite{eells-fuglede}.

The study of harmonic maps into singular spaces was further extended to targets with positive curvature bounds. In his thesis, Serbinowski \cite{serbinowski} established existence, uniqueness, and regularity results for harmonic maps from Riemannian manifolds into CAT(1) spaces under Dirichlet boundary conditions. \cite{bfhmsz2} proved the existence of harmonic maps from surfaces into CAT(1) spaces for the homotopy problem and therefore generalized the classical Sacks-Uhlenbeck theorem \cite{sacks-uhlenbeck}, while \cite{bfhmsz1} established that harmonic maps from Riemannian polyhedra to CAT(1) spaces are H\"older continuous, and Lipschitz away from the $n-2$ skeleton of the domain.

There are many known Liouville-type theorems for harmonic functions and harmonic maps. For a detailed history we refer the reader to the introduction of \cite{kuwae-sturm}. In \cite{yau1}, Yau shows that any positive harmonic function on a complete manifold of non-negative Ricci curvatures must be constant. In \cite{cheng}, Cheng generalizes Yau's gradient estimate to harmonic maps into non-positively curved manifolds and obtains the Liouville theorem for harmonic maps, and in \cite{choi} Choi generalizes the result further to targets with posive curvature bounds. Recently, a gradient estimate in the case of singular targets has been carried out in \cite{zzz}, resulting in a Liouville-type theorem.

Our first goal is to prove a Liouville-type theorem for harmonic maps from Riemannian polyhedra into CAT(1) spaces. The main analytic result generalizes a result of \cite{yau2}, where Yau shows that on a complete manifold, any non-negative subharmonic function in $L^p$ ($p>1$) must be constant. In Section~\ref{Sliouville}, under the assumption that that image of the harmonic map is sufficiently small, the distance function on the image will be convex, and we can compute a target variation formula. One result of this formula is that the distance squared to a point, $d^2(u,Q)$ is a subharmonic function as long as $d(u,Q)\le\frac{\pi}{2}$. Combining these two results yields a Liouville-type theorem.

\begin{theorem}\label{main1}
Let $(X,g)$ be a complete admissible Riemannian polyhedron, and let $u:(X,g)\to(Y,d)$ be a harmonic map into a CAT(1) space. If there is a point $Q\in Y$ so that $d(u,Q)<\frac{\pi}{2}$ for all $x\in M$ and $d^2(u,Q)\in L^1(X)$, then $u$ is a constant map. If $(Y,d)$ is CAT(0) and $d^2(u,Q)\in L^p(X)$ for some $p>1$, we need not assume $d(u,Q)$ is bounded and still $u$ is a constant map.
\end{theorem}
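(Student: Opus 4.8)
The plan is to combine the target variation formula established in Section~\ref{Sliouville} --- which makes $d^2(u,Q)$ weakly subharmonic on all of $X$, together with a lower bound for its Laplacian in terms of the energy density --- with a Yau-type Liouville theorem for non-negative subharmonic functions in $L^p$, $p>1$, on complete admissible polyhedra. I would first prove the latter (the ``main analytic result''): \emph{if $f\ge0$ is weakly subharmonic on a complete admissible Riemannian polyhedron $(X,g)$ and $f\in L^p(X)$ for some $p>1$, then $f$ is constant.}

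The lemma is proved by a Caccioppoli/cutoff argument. Fix $x_0\in X$, put $\rho=d_X(x_0,\cdot)$, and for $R>0$ take $\eta=\eta_R=\max\{0,\min\{1,\,2-\rho/R\}\}$, a Lipschitz function that is $1$ on $B_R(x_0)$, supported in the compact set $\overline{B_{2R}(x_0)}$, with $|\nabla\eta|\le 1/R$ a.e. Using $\eta^2(f+\varepsilon)^{p-1}$ as a non-negative compactly supported test function in the weak inequality $\Delta f\ge0$, expanding $\nabla\!\left(\eta^2(f+\varepsilon)^{p-1}\right)$, estimating the cross term by Cauchy--Schwarz, absorbing, and then letting $\varepsilon\to0$, one obtains
\[
\int_{B_R(x_0)} f^{p-2}|\nabla f|^2\,dV \ \le\ \frac{4}{(p-1)^2 R^2}\int_{B_{2R}(x_0)\setminus B_R(x_0)} f^{p}\,dV ,
\]
whose right-hand side tends to $0$ as $R\to\infty$ since $f^p\in L^1(X)$. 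Hence $f^{p/2}$ has vanishing gradient (for $p=2$ this is just $\nabla f\equiv0$), so $f$ is locally constant, and therefore constant on $X$ because the complement of the codimension-two skeleton is connected.

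Now I apply the lemma. \emph{CAT(1) case.} Let $f=d^2(u,Q)$. Since $d(u,Q)<\tfrac{\pi}{2}$, the target variation formula gives that $f$ is weakly subharmonic on $X$ and, more precisely, $\Delta f\ge \phi(d(u,Q))\,|\nabla u|^2$ for a continuous $\phi$ that is positive on $[0,\tfrac{\pi}{2})$ (the factor encoding the curvature bound; e.g.\ $\phi(r)=2r\cot r$, up to the normalization of the energy density). Moreover $d(u,Q)<\tfrac{\pi}{2}$ forces $0\le f<\tfrac{\pi^2}{4}$, so the hypothesis $f\in L^1(X)$ automatically upgrades to $f\in L^2(X)$, since $\int_X f^2\le\tfrac{\pi^2}{4}\int_X f<\infty$. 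The lemma with $p=2$ then gives $f\equiv c^2$ for a constant $c\in[0,\tfrac{\pi}{2})$. If $c=0$ then $u\equiv Q$; if $c\in(0,\tfrac{\pi}{2})$ then $0=\Delta f\ge\phi(c)\,|\nabla u|^2$ with $\phi(c)>0$, so $|\nabla u|^2\equiv0$, and since $u$ is continuous and the complement of the codimension-two skeleton is connected, $u$ is constant. \emph{CAT(0) case.} Here $f=d^2(u,Q)$ is weakly subharmonic on all of $X$ with no smallness assumption, with $\Delta f\ge 2|\nabla u|^2$; since $f\in L^p(X)$ for some $p>1$, the lemma gives $f$ constant, hence $|\nabla u|^2\equiv0$ and $u$ is constant.

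The point requiring the most care is verifying the lemma's hypothesis for $f=d^2(u,Q)$, namely that $\Delta f\ge0$ holds \emph{globally} on $X$, i.e.\ tested against all non-negative compactly supported Lipschitz functions, not merely those supported in the interior of a single top-dimensional face. This is precisely where the harmonicity of $u$ --- including the balancing condition it satisfies along the codimension-one skeleton, built into admissibility --- enters, and it is the reason the target variation formula must be derived globally rather than face by face. The remaining polyhedral bookkeeping (existence of good cutoffs, validity of the integration by parts for the polyhedral Dirichlet form, and the mild regularity needed to use $\eta^2(f+\varepsilon)^{p-1}$ as a test function, handled by the $\varepsilon$-regularization together with the local Lipschitz bound for $u$ away from the codimension-two skeleton) is then routine.
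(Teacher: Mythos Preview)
Your proposal is correct and follows essentially the same route as the paper: a Caccioppoli/cutoff proof of the $L^p$ Liouville lemma (Proposition~\ref{Lp liouville}) combined with the target variation formula (Proposition~\ref{target variation}) to make $d^2(u,Q)$ subharmonic, plus the interpolation $L^1\cap L^\infty\subset L^p$ in the CAT(1) case before concluding $|\nabla u|^2\equiv 0$. The only difference in emphasis is your last paragraph: the paper passes from face-by-face to global weak subharmonicity simply by additivity of the Dirichlet integral over the top-dimensional faces (see the paragraph following Proposition~\ref{target variation}), rather than by invoking a balancing condition along the codimension-one skeleton.
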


In Section~\ref{Sbochner} we will apply the target variation from Section~\ref{Sliouville} and the methods of \cite{freidin} to extend the Bochner formula found in \cite{freidin} to the CAT(1) setting. Let $\pi$ denote the pull-back tensor as defined by Korevaar and Schoen in \cite{korevaar-schoen} as the symmetric tensor polarizing the quadratic form
\[
\pi(v,v) = \abs{u_*v}^2 = \lim_{\epsilon\to 0}\frac{d^2(u(x+\epsilon v),u(x))}{\epsilon^2}.
\]
This tensor satisfies $tr(\pi) = \abs{\nabla u}^2$. With this definition, we can state out Bochner formula for maps into CAT(1) spaces.
\begin{theorem}\label{main2}
Let $u:(M,g)\to(Y, d)$ be a harmonic map from a Riemannian manifold $(M,g)$ into a CAT(1) space $(Y, d)$, and suppose $d(u,Q)<\frac{\pi}{2}$ for some $Q\in Y$. Then $\abs{\nabla u}^2$ satisfies the weak differential inequality
\[
\frac{1}{2}\Delta\abs{\nabla u}^2 \ge \la Ric,\pi\ra + \abs{\pi}^2 - \abs{\nabla u}^4,
\]
where $Ric$ is the Ricci tensor on $M$.
\end{theorem}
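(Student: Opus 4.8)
The plan is to reduce the statement to a local weak inequality and then to adapt the argument of \cite{freidin}, which proves this Bochner formula for NPC targets, replacing its non-positive-curvature comparisons by the CAT($1$) target variation formula of Section~\ref{Sliouville}.

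First I would localize. Since harmonic maps into CAT($1$) spaces are continuous --- in fact locally Lipschitz on a manifold domain, by the regularity theory recalled above --- for each $x_0\in M$ one can choose $r>0$ so that $u(B_r(x_0))$ lies in a convex ball $B_\rho(Q)\subset Y$ with $\rho<\tfrac\pi2$; on such a ball the squared distance to every point is convex and the hypotheses of the target variation formula hold. Since the three terms $\la Ric,\pi\ra$, $\abs\pi^2$, $\abs{\nabla u}^4$ are all defined pointwise almost everywhere, the theorem then reduces to showing that for every non-negative $\phi\in C_c^\infty(B_r(x_0))$,
\[
\tfrac12\int_M\abs{\nabla u}^2\,\Delta\phi\;\ge\;\int_M\phi\left(\la Ric,\pi\ra+\abs\pi^2-\abs{\nabla u}^4\right),
\]
which globalizes by a partition of unity.

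The proof of this weak inequality combines two families of variations of $u$. The inner (domain) variations --- reparametrizing the domain by the flow of a compactly supported vector field and using that $u$ is energy-critical --- give, in the weak sense, that the Korevaar--Schoen stress-energy tensor $e(u)g-\pi$, where $e(u)=\tfrac12\abs{\nabla u}^2$, is divergence free, i.e. $\tfrac12\nabla\abs{\nabla u}^2=\operatorname{div}\pi$; using this together with a regularization of $\pi$ as in \cite{freidin}, the left-hand side above can be rewritten as $\int_M\la\pi,\operatorname{Hess}\phi\ra$. The outer (target) variations play, in this singular setting, the role that the vector-valued harmonic map equation $\operatorname{tr}\nabla du=0$ and the Weitzenb\"ock identity for $du$ play in the smooth Eells--Sampson proof: for small $t$ one replaces $u$ by the map $u_t$ obtained by interpolating along geodesics in $B_\rho(Q)$ between $u(x)$ and a competitor built from $u$ and $\phi$, with variation fields modelled on parallel fields along $u$. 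Criticality of $u$ forces the first variation of energy to be non-negative, while the energy density of $u_t$ is estimated from above by the CAT($1$) condition in the form of Reshetnyak majorization; the deficit in this comparison is at most $\phi\,(\abs{\nabla u}^4-\abs\pi^2)$ pointwise, which is exactly the bound $\sum_{i,j}\la R^Y(u_*e_i,u_*e_j)u_*e_j,u_*e_i\ra\le\abs{\nabla u}^4-\abs\pi^2$ furnished by an upper curvature bound in the smooth argument. The interaction of the variation fields with the curvature of $M$, through normal coordinates on the domain, produces the term $\la Ric,\pi\ra$, exactly as commuting covariant derivatives on $M$ does in Eells--Sampson. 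Feeding the resulting lower bound into the reformulated left-hand side and discarding the non-negative remainder corresponding to $\abs{\nabla du}^2$ yields the displayed inequality.

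I expect the target-variation step to be the main obstacle: one must extract from the formula of Section~\ref{Sliouville} precisely the deficit $\abs{\nabla u}^4-\abs\pi^2$, and show that the extra spherical correction terms --- absent in the NPC case of \cite{freidin} and present here only because the target is CAT($1$) --- are genuinely of lower order and are absorbed using the localization $\rho<\tfrac\pi2$. A secondary, technical difficulty is that $\abs{\nabla u}^2$ and $\pi$ are only weakly differentiable, so every manipulation above must be performed in integrated form against $\phi$ on the difference-quotient and mollified regularizations of \cite{freidin}, with the limits justified at the end.
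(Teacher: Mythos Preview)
Your plan diverges from the paper's proof in a way that leaves a genuine gap. The paper does \emph{not} mimic the smooth Eells--Sampson argument directly; it goes through a mean value inequality for $\abs{\nabla u}^2$ (Lemma~\ref{mean value}), proved via the asymptotics of the frequency function $\sigma E(\sigma)/I(\sigma)$ on small balls centered at $x_0$ with $Q=u(x_0)$, and then invokes a general principle (Proposition~23 of \cite{freidin}) converting mean value inequalities into weak Laplacian inequalities. The term $\abs{\pi}^2-\abs{\nabla u}^4$ appears not from any Reshetnyak-type pointwise deficit, but from the Taylor expansion of $\tan^2 d(u,Q)$ and the computation of $\int_{B_\sigma}\abs{\nabla d^2(u,Q)}^2$ (Lemma~\ref{grad d squared}); the Ricci term comes from the expanded domain variation formula~\eqref{domain}.

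The step in your outline that would fail is the target variation ``with variation fields modelled on parallel fields along $u$.'' In a CAT($1$) target there is no pull-back bundle $u^*TY$, no parallel transport, and no way to build the family of competitors you describe; the only target variation actually available is the radial contraction toward a single point $Q$, exactly as in Proposition~\ref{target variation}. That single direction is far too little to reproduce the Weitzenb\"ock identity directly---it gives subharmonicity of $d^2(u,Q)$, not control of second derivatives of $u$ in all directions. The whole point of the frequency/mean-value machinery in \cite{freidin} and in this paper is that it squeezes the full Bochner inequality out of just this radial variation by looking at the $\sigma^2$ coefficients in the expansions of $E(\sigma)$, $I(\sigma)$, $A(\sigma)$. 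Your identity $\tfrac12\int\abs{\nabla u}^2\Delta\phi=\int\la\pi,\operatorname{Hess}\phi\ra$ from the divergence-free stress-energy tensor is correct and is morally present in the paper (inside the domain variation formula), but by itself it does not produce the right-hand side; you still need a mechanism to bound $\int\la\pi,\operatorname{Hess}\phi\ra$ from below, and that mechanism is the frequency monotonicity, not a parallel-field target variation.
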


\begin{remark}
Compare the above theorem to the results of \cite{freidin}. When the target $Y$ is a CAT($\kappa$) metric space, for $\kappa\in\{0,-1\}$, a harmonic map $u:M\to Y$ satisfies
\[
\frac{1}{2}\Delta\abs{\nabla u}^2 = \la Ric,\pi\ra - \kappa\left(\abs{\nabla u}^4 - \abs{\pi}^2\right).
\]
Together with the above result for $\kappa = 1$, these results entirely capture the roles of domain and target curvature from the original Eells-Sampson Bochner formula, though it still lacks the Hessian term from that original formula.
\end{remark}

\section{A Liouville-type theorem}\label{Sliouville}

The main analytic tool in our approach to proving the $L^p$ Liouville-type theorem will be to apply a reverse Poincar\'e inequality, much in the style of Schoen-Yau \cite{schoen-yau} or of \cite{sinaei}. Here $(X,g)$ will be a complete admissible Riemannian polyhedron.

\begin{proposition}\label{Lp liouville}
If $f\in L^p(X)$ for $p>1$ is a locally Lipschitz, non-negative, subharmonic function, then $f$ is a constant.
\end{proposition}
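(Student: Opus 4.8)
The plan is to prove a reverse Poincar\'e (Caccioppoli-type) inequality for $f$ and then exhaust $X$ by metric balls centered at a fixed point, using the global $L^p$ bound to annihilate the resulting error term; this is the strategy of Yau~\cite{yau2}, adapted as in \cite{schoen-yau,sinaei}. Here ``subharmonic'' means $f\in W^{1,2}_{loc}(X)$ with $\int_X\la\nabla f,\nabla\varphi\ra\le 0$ for every non-negative, locally Lipschitz $\varphi$ of compact support, the Dirichlet integral being the sum of the integrals over the top-dimensional faces; local Lipschitz continuity of $f$ ensures $f\in W^{1,2}_{loc}$. For $p\ge 2$ the function $\varphi=\eta^2 f^{p-1}$ is an admissible test function for any Lipschitz cutoff $\eta$ of compact support, while for $1<p<2$ I would instead use $\varphi=\eta^2(f+\delta)^{p-1}$ with $\delta>0$ and send $\delta\to 0$ at the end, since $f^{p-2}$ need not be locally bounded near $\{f=0\}$.

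Substituting $\varphi=\eta^2(f+\delta)^{p-1}$ into the weak inequality and expanding
\[
\nabla\varphi=2\eta(f+\delta)^{p-1}\nabla\eta+(p-1)\eta^2(f+\delta)^{p-2}\nabla f
\]
gives
\[
(p-1)\int\eta^2(f+\delta)^{p-2}\abs{\nabla f}^2\le 2\int\eta(f+\delta)^{p-1}\abs{\nabla\eta}\,\abs{\nabla f}.
\]
Writing the right-hand integrand as $2\bigl(\eta(f+\delta)^{\frac{p-2}{2}}\abs{\nabla f}\bigr)\bigl((f+\delta)^{\frac{p}{2}}\abs{\nabla\eta}\bigr)$ and applying Young's inequality with a weight chosen to absorb the gradient term into the left side yields
\[
\int\eta^2(f+\delta)^{p-2}\abs{\nabla f}^2\le\frac{4}{(p-1)^2}\int(f+\delta)^p\abs{\nabla\eta}^2.
\]
Letting $\delta\to 0$ (monotone convergence on the left, dominated convergence on the right, both legitimate because $\eta$ is compactly supported and $f\in L^p_{loc}$) produces the Caccioppoli inequality $\int\eta^2 f^{p-2}\abs{\nabla f}^2\le\frac{4}{(p-1)^2}\int f^p\abs{\nabla\eta}^2$; in particular the left-hand side is finite, so $\abs{\nabla f}=0$ a.e.\ on $\{f=0\}$ as well, which in any case is automatic since a.e.\ point of $\{f=0\}$ is a minimum of the locally Lipschitz function $f$.

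Now I would fix $x_0\in X$ and choose $\eta=\eta_R$ equal to $1$ on the metric ball $B_R(x_0)$, vanishing outside $B_{2R}(x_0)$, and with $\abs{\nabla\eta_R}\le 2/R$, built from the distance function $r(\cdot)=d(x_0,\cdot)$, which is Lipschitz with $\abs{\nabla r}\le 1$; by completeness of $(X,g)$ these balls are precompact, hence of finite volume, so $\eta_R$ is an admissible cutoff. The Caccioppoli inequality then gives
\[
\int_{B_R(x_0)}f^{p-2}\abs{\nabla f}^2\le\frac{16}{(p-1)^2R^2}\int_{B_{2R}(x_0)}f^p\le\frac{16}{(p-1)^2R^2}\int_X f^p,
\]
and the right-hand side tends to $0$ as $R\to\infty$ because $f\in L^p(X)$. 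Hence $f^{p-2}\abs{\nabla f}^2\equiv 0$, i.e.\ $\nabla f=0$ almost everywhere on $X$. Consequently $\nabla\bigl(f^{p/2}\bigr)=\frac{p}{2}f^{p/2-1}\nabla f=0$ a.e., and since an admissible polyhedron is connected with connected regular part, so that a function that is locally constant off the codimension-two skeleton is globally constant, we conclude that $f^{p/2}$, and therefore $f$, is constant.

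The Caccioppoli estimate itself is routine; the step requiring the most care in the polyhedral setting is justifying the two ingredients feeding into it: that the weak formulation of subharmonicity genuinely admits test functions supported across the codimension-one skeleton where top faces are glued, so that the integration by parts above is valid globally on $X$ and not merely face by face, and that metric balls in a complete admissible polyhedron are precompact and of finite volume, which is the Hopf--Rinow-type statement one needs for the cutoffs $\eta_R$; see \cite{eells-fuglede} for these foundational facts. A minor additional point is the $\delta$-regularization, which is needed only to make $\eta^2(f+\delta)^{p-1}$ a legitimate test function in the range $1<p<2$.
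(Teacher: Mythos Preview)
Your argument is correct and follows essentially the same Caccioppoli/Yau approach as the paper: test subharmonicity against $\eta^2 f^{p-1}$ with a standard radial cutoff, derive the reverse Poincar\'e inequality, and let $R\to\infty$. You are in fact more careful than the paper, which omits the $\delta$-regularization needed when $1<p<2$ and does not spell out the polyhedral Hopf--Rinow and gluing justifications you mention.
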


\begin{proof}
Since $f$ is subharmonic, for any non-negative test function $\phi$ we have $\int_X\la\nabla\phi,\nabla f\ra\le0$. Choose $\phi$ so that $\phi\equiv 1$ on a ball $B_R$, $\phi\equiv 0$ on $X\backslash B_{2R}$, and $\abs{\nabla\phi}\le\frac{1}{R}$. For instance, $\phi(r) = \max\{0,\min\{1,2-\frac{r}{R}\}\}$ will do. Then for $p>1$,
\begin{eqnarray*}
0 & \ge & \int_X \la\nabla (\phi^2 f^{p-1}), \nabla f\ra\\
 & = & \int_X \la\nabla (\phi^2)f^{p-1} + (p-1)f^{p-2}	\phi^2\nabla f,\nabla f\ra\\
 & = & 2\int_X f^{p-1}\phi\la \nabla\phi,\nabla f\ra + (p-1)\int_Xf^{p-2}\phi^2\abs{\nabla f}^2\\
 & \ge & (p-1)\int_X f^{p-2}\phi^2\abs{\nabla f}^2 - 2\left(\int_X f^p\abs{\nabla\phi}^2\right)^\frac{1}{2} \left( \int_X f^{p-2}\phi^2\abs{\nabla f}^2\right)^\frac{1}{2},
\end{eqnarray*}
i.e.
\[
4\int_X f^p\abs{\nabla\phi}^2 \ge (p-1)^2	\int_X f^{p-2}\phi^2\abs{\nabla f}^2.
\]

Using our assumption on $\phi$, we deduce
\[
\int_{B_R} f^{p-2}\abs{\nabla f}^2 \le \frac{C}{R^2}\int_{B_{2R}}f^p.
\]
Now send $R\to\infty$, the integral on the right hand side remains bounded, as $f\in L^p$, while $\frac{C}{R^2}\to0$. Hence $\nabla f\equiv 0$. By the admissibility of $X$, this implies that $f$ is a constant function.
\end{proof}





The main geometric tool we will use is a target variation formula that will give us a positive subharmonic function.

\begin{proposition}\label{target variation}
For an energy minimizing map $u:(M,g)\to B_{\frac{\pi}{2}}(Q)\subset(Y,d_Y)$ from a Riemannian manifold $(M,g)$ to a convex neighborhood in a CAT(1) space $(Y,d_Y)$, we have in the weak sense

\[
\frac{1}{2} \Delta d_Y^2(u(x),Q) \ge \abs{\nabla u}^2 + \left(\frac{d_Y(u,Q)\cos d_Y(u,Q)}{\sin d_Y(u,Q)} - 1\right)\left(\abs{\nabla u}^2 - \abs{\nabla d_Y(u,Q)}^2\right).
\]
\end{proposition}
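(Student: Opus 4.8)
The plan is to prove the stated inequality in its weak form: for every nonnegative test function $\varphi$ (by density it is enough to take $\varphi$ Lipschitz with compact support, $0\le\varphi\le1$) one should show
\[
\tfrac12\int_M \la\nabla\varphi,\nabla d_Y^2(u,Q)\ra \;\le\; -\int_M\varphi\left(\abs{\nabla u}^2 + \big(\rho\cot\rho - 1\big)\big(\abs{\nabla u}^2-\abs{\nabla\rho}^2\big)\right),
\]
where I abbreviate $\rho=\rho(x)=d_Y(u(x),Q)$, a locally Lipschitz function, and $\rho\cot\rho=\frac{d_Y(u,Q)\cos d_Y(u,Q)}{\sin d_Y(u,Q)}$. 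Since $u$ is continuous with image in $B_{\frac\pi2}(Q)$, on the (compact) support of $\varphi$ we have $\rho\le\rho_0$ for some $\rho_0<\frac\pi2$, so all quantities above, in particular $\cot\rho$, are bounded there.

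I would obtain this inequality by a target variation in the spirit of Korevaar--Schoen: radially contract $u$ toward $Q$. Because $B_{\frac\pi2}(Q)$ is convex in the CAT(1) space $Y$, for each $x$ there is a unique geodesic $\gamma_x\colon[0,1]\to Y$ with $\gamma_x(0)=u(x)$ and $\gamma_x(1)=Q$, and $\gamma_x(s)$ depends on $(x,s)$ compatibly with the finite-energy (locally Lipschitz) structure of $u$. For small $t>0$ set $u_t(x)=\gamma_x\big(t\varphi(x)\big)$; then $u_t$ is an admissible competitor agreeing with $u$ outside $\mathrm{supp}\,\varphi$, so energy minimality of $u$ gives $\frac{d}{dt}\big|_{t=0^+}E(u_t)\ge0$.

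The crux is the estimate of $\abs{\nabla u_t}^2$. For $x$ and a nearby $x'$ one compares the configuration of $u(x),u(x'),Q,u_t(x),u_t(x')$ — with $u_t(x)\in\overline{u(x)Q}$ and $u_t(x')\in\overline{u(x')Q}$ — to the corresponding configuration in the model sphere $S^2$ of curvature $1$; the CAT(1) condition makes $d_Y(u_t(x),u_t(x'))$ no larger than its spherical model value. Passing to directional difference quotients and letting the spacing tend to zero (in the Korevaar--Schoen sense, or classically if one invokes the interior Lipschitz regularity of $u$ from \cite{bfhmsz1}), this should yield
\[
\abs{\nabla u_t}^2 \;\le\; (1-t\varphi)^2\abs{\nabla\rho}^2 + \big(1-t\varphi\,\rho\cot\rho\big)^2\big(\abs{\nabla u}^2-\abs{\nabla\rho}^2\big) - t(1-t\varphi)\la\nabla\varphi,\nabla\rho^2\ra + o(t),
\]
the first two terms expressing that under the contraction the radial part $\abs{\nabla\rho}^2$ of the energy density scales like the distance to $Q$ while the tangential part $\abs{\nabla u}^2-\abs{\nabla\rho}^2$ scales like the sine of that distance — which is exactly where $\rho\cot\rho$ is born — and the last term recording the variation of $\varphi$. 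Integrating, differentiating at $t=0^+$, using $\frac{d}{dt}\big|_{0^+}E(u_t)\ge0$, and rewriting $\abs{\nabla\rho}^2+\rho\cot\rho\,(\abs{\nabla u}^2-\abs{\nabla\rho}^2)=\abs{\nabla u}^2+(\rho\cot\rho-1)(\abs{\nabla u}^2-\abs{\nabla\rho}^2)$ should give the displayed weak inequality, hence the Proposition.

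I expect the main obstacle to be precisely this sharp comparison estimate for $\abs{\nabla u_t}^2$: one must use the optimal CAT(1) quadrilateral comparison against the round model (a crude convexity bound for $d_Y^2(\cdot,Q)$ loses the correct coefficient), keep all error terms uniform — this is where $\rho\le\rho_0<\frac\pi2$ on $\mathrm{supp}\,\varphi$ is used — and carry the computation through the measurable directional-derivative calculus for finite-energy maps into metric spaces. As a consistency check, in the smooth case ($Y$ a Riemannian manifold of sectional curvature $\le1$) the same inequality is just the chain rule $\tfrac12\Delta(f\circ u)=\mathrm{Hess}^Y f(du,du)$ for the harmonic map $u$ applied to $f=\tfrac12 d_Y^2(\cdot,Q)$, together with the Hessian comparison $\mathrm{Hess}^Y f\ge$ (its value on $S^2$); the variational argument is the metric-space substitute for this. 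The remaining ingredients — admissibility of the competitor from convexity of $B_{\frac\pi2}(Q)$ and the final integration by parts — are routine.
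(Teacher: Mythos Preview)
Your proposal is correct and follows essentially the same route as the paper: both perform the radial contraction $u_t(x)=\gamma_x(t\varphi(x))$ toward $Q$, use the CAT(1) triangle comparison against $S^2$ to estimate $\abs{\nabla u_t}^2$ via the radial/tangential decomposition you describe (this is exactly where the paper produces the factor $\frac{\sin\abs{\tilde u_t}_0}{\sin\abs{\tilde u}_0}$ and hence $\rho\cot\rho$), and then differentiate the energy at $t=0^+$ using minimality. The only cosmetic difference is that the paper first passes to the comparison triangle in normal coordinates on $S^2$ and performs the contraction $\tilde u_t=(1-t\eta)\tilde u$ there before invoking comparison, whereas you contract in $Y$ and then compare; the resulting first-order inequality is identical.
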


\begin{proof}

Choose normal coordinates on $\mathbb{S}^2$. This idenitifies $\mathbb{S}^2\backslash\{p\}$ with $B_{\pi}\subset\R^2$, but with the metric $ds^2 = dr^2 + sin^2(r)d\theta^2$. In particular it identifies a hemisphere with $B_\frac{\pi}{2}\subset\R^2$.

For the geodesic triangle with vertices $\{Q, u(x), u(x+\epsilon v)\}$ in $Y$,  we consider the comparison triangle with vertices $\{0, \tu(x), \tu(x+\epsilon v)\} $  in $B_\frac{\pi}{2}$ (with the spherical metric).
 Then we extend the map $\tu$ by mapping the geodesic joining $x$ and $x+\epsilon v$ to the spherical geodesic joining $\tu(x)$ and $\tu(x+\epsilon v)$.

Choose $\eta$ as a non-negative smooth test function with compact support in a neighborhood of  $x$, define the map $\tu_t(y) = (1-t\eta(y))\tu(y)$ for $y$ between $x$ and $x+\epsilon v$. Its derivative at $x$ in the direction of $v$ is

\[
v\cdot\nabla \tu_t(x) = (1-t\eta(x))\big(v\cdot\nabla \tu(x)\big) - t\big(v\cdot\nabla\eta(x)\big)\tu(x).
\]

This vector is based at $\tu_t(x)$, so to calculate its magnitude we must use the metric at that point. The vector $\tu(x)$ is already pointing in the radial direction when it is based at $\tu(x)$ or even at $\tu_t(x)$.  For $v\cdot\nabla\tu(x)$, the radial component is simply $v\cdot\nabla\abs{\tu}_0(x)$. The tangential component, when measured at $\tu(x)$, has norm
\[
\sqrt{\abs{v\cdot\nabla\tu(x)}^2_{\tu(x)} - \abs{v\cdot\nabla\abs{\tu}_0(x)}^2}.
\] 
Here subscripts are used to denote the point at which the norm is being taken. In particular, $\abs{p}_0$ is the distance of the point $p$ from the point $0$ on $\mathbb{S}^2$. For a vector $w\perp\frac{\partial}{\partial r}$, $\abs{w}_{\tu(x)} = \frac{\sin\abs{\tu(x)}_0}{\abs{\tu(x)}_0}\abs{w}_0$, and $\abs{w}_{\tu_t(X)} = \frac{\sin\abs{\tu_t(x)}_0}{\abs{\tu_t(x)}_0}\abs{w}_0 = \frac{\sin\abs{\tu_t(x)}_0}{(1-t\eta(x))\abs{\tu(x)}_0}\abs{w}_0$. Hence the length of the tangential component of $v\cdot\nabla\tu(x)$, when measured at $\tu_t(x)$, is
\[
\frac{\sin\abs{\tu_t(x)}_0}{(1-t\eta(x))\sin\abs{\tu(x)}_0}\sqrt{\abs{v\cdot\nabla\tu(x)}^2_{\tu(x)} - \abs{v\cdot\nabla\abs{\tu}_0(x)}^2}.
\]

Now
\begin{eqnarray*}
\abs{v\cdot\nabla\tu_t(x)}^2_{\tu_t(x)} & = & \Big[ (1-t\eta(x))(v\cdot\nabla\abs{\tu}_0(x)) - t(v\cdot\nabla\eta(x))\abs{\tu(x)}_0\Big]^2\\
 & & + \frac{\sin^2\abs{\tu_t(x)}_0}{\sin^2\abs{\tu(x)}_0}\left(\abs{v\cdot\nabla\tu(x)}^2_{\tu(x)} - \abs{v\cdot\nabla\abs{\tu}_0(x)}^2\right).
\end{eqnarray*}
And by the Taylor's expansion,  we see that
\[
\frac{\sin^2\abs{\tu_t(x)}_0}{\sin^2\abs{\tu(x)}_0} = 1 - 2t\eta\frac{\abs{\tu(x)}_0\cos\abs{\tu(x)}_0}{\sin\abs{\tu(x)}_0} + O(t^2).
\]
So now we have

\begin{eqnarray*}
\abs{v\cdot\nabla\tu_t(x)}^2_{\tu_t(x)} & = & (1-2t\eta(x))\abs{v\cdot\nabla\tu(x)}^2_{\tu(x)} - t(v\cdot\nabla\abs{\tu}^2_0(x))(v\cdot\nabla\eta(x)) \\
 & & - 2t\eta(x)\left(\frac{\abs{\tu(x)}_0\cos\abs{\tu(x)}_0}{\sin\abs{\tu(x)}_0}-1\right)\left(\abs{v\cdot\nabla\tu(x)}^2_{\tu(x)} - \abs{v\cdot\nabla\abs{\tu}_0(x)}^2\right)+ O(t^2).
\end{eqnarray*}


Now, by the triangle comparison property, and send $\epsilon\to 0$, we have
\begin{eqnarray*}
\abs{(u_t)_*v(x)}^2  &\le& 
  (1-2t\eta(x))\abs{u_* v(x)}^2 - t(v\cdot\nabla\eta(x))(v\cdot\nabla d_Y^2(u,Q)(x)) \\
 &&  - 2t\eta(x) \left( \frac{d_Y(u,Q)\cos d_Y(u,Q)}{\sin d_Y(u,Q)} - 1 \right) \left( \abs{u_*v(x)}^2 - (v\cdot\nabla d_Y(u,Q))^2(x) \right)+ O(t^2).
\end{eqnarray*}

Averaging over the sphere of unit normal vectors yields

\begin{eqnarray*}
\abs{\nabla u_t}^2 & \le & (1-2t\eta)\abs{\nabla u}^2 - t\nabla\eta\cdot\nabla d_Y^2(u,Q)\\
 & & - 2t\eta\left( \frac{d_Y(u,Q)\cos d_Y(u,Q)}{\sin d_Y(u,Q)} - 1\right) \left( \abs{\nabla u}^2 - \abs{\nabla d_Y(u,Q)}^2\right) + O(t^2).
\end{eqnarray*}


By the energy minimizing assumption of $u$, as $t\to 0$, we see
\[
0 \le \int_M\left[-2\eta\abs{\nabla u}^2 +d_Y^2(u,Q)\Delta\eta - 2\eta\left(\frac{d_Y(u,Q)\cos d_Y(u,Q)}{\sin d_Y(u,Q)}-1\right)\left(\abs{\nabla u}^2 - \abs{\nabla d_Y(u,Q)}^2\right)\right].
\]
In other words, we have the weak inequality
\[
\frac{1}{2}\Delta d_Y^2(u,Q) \ge \abs{\nabla u}^2 + \left(\frac{d_Y(u,Q)\cos d_Y(u,Q)}{\sin d_Y(u,Q)} - 1\right)\left( \abs{\nabla u}^2 - \abs{\nabla d_Y(u,Q)}^2\right).
\]
\end{proof}

Even though Proposition~\ref{target variation} is stated when the domain of the harmonic map is a manifold, it can be applied on the top dimensional faces of a Riemannian polyhedron. To be precise, for a polyhedron $X$ with faces $F_i$, $d_Y^2(u,Q)$ satisfies the weak inequality on each face, and by the admissibility, 
\[
\int_X\la\nabla d_Y^2(u,Q),\nabla\eta\ra = \sum_i\int_{F_i}\la\nabla d_Y^2(u,Q),\nabla\eta\ra.
\]
And so if the right hand side of the expression for $\Delta d_Y^2(u,Q)$ is positive then $d_Y^2(u,Q)$ is subharmonic in the sense that is required for Proposition~\ref{Lp liouville}. 

\medskip

Now we are ready to prove the $L^p$ Liouville-type theorem.

\begin{proof1}
In the case of a CAT(1) target, apply Proposition~\ref{target variation} on each face of $X$. Since $0\le\frac{z\cos z}{\sin z}\le 1$ for $0\le z\le\frac{\pi}{2}$, we see that $d_Y^2(u,Q)$ is subharmonic on $X$ in the sense required for Proposition~\ref{Lp liouville}. Then, since $d_Y^2(u,Q)\in L^\infty\cap L^1$, it is in $L^p$ for all $1<p<\infty$ by interpolation. Hence by Proposition~\ref{Lp liouville},  $d_Y^2(u,Q)$ is constant. And

\[
0 = \frac{1}{2}\Delta d_Y^2(u,Q) \ge \frac{d_Y(u,Q)\cos d_Y(u,Q)}{\sin d_Y(u,Q)}\abs{\nabla u}^2 \ge 0.
\]

Since $\frac{z\cos z}{\sin z}$ vanishes first at $\frac{\pi}{2}$, and $d_Y(u,Q)<\frac{\pi}{2}$ by assumption, $\abs{\nabla u}^2$ must vanish. In other words, $u$ is a constant map.

In the case where the target is CAT(0),  we may use the target variation formula from \cite{gromov-schoen} or from \cite{freidin}, which says
\[
\frac{1}{2}\Delta d_Y^2(u,Q) \ge \abs{\nabla u}^2 \ge 0.
\]
The remainder of the argument, applying Proposition~\ref{Lp liouville} follows exactly the same. Once $\Delta d_Y^2(u,Q) = 0$, we conclude again that $\abs{\nabla u}^2\equiv 0$ and  $u$ is a constant map.
\end{proof1}

\section{Bochner Formula}\label{Sbochner}

In \cite{freidin}, the first author obtained a version of the Eells-Sampson Bochner formula  for the case of NPC spaces. The main technique is based on the target variation formula and monotonicity of frequency function. In this section, we will derive a Bochner formula for the case of CAT(1) spaces by a similar argument. We will use $\la-,-\ra$ and $\abs{\cdot}$ to denote the inner product structure on the space of symmetric $2$-tensors, so that $\la A,B\ra = g^{ik}g^{j\ell}A_{ij}B_{k\ell}$. We first have the following lemma which will be used in deriving a mean value inequality.

\begin{lemma}\label{grad d squared}
For a Lipschitz map $u:(M,g)\to B_{\frac{\pi}{2}}(Q)\subset (Y,d)$ into a convex neighborhood in a CAT(1) space,
\[
\int_{B_\sigma}\abs{\nabla d^2(u,Q)}^2d\mu = \frac{4\omega_n}{n+2}\abs{\pi}^2(0)\sigma^{n+2} + o(\sigma^{n+2}).
\]
Here $B_\sigma\subset M$ is a ball of radius $\sigma$, $\abs{\pi}^2$ means $g^{ik}g^{j\ell}\pi_{ij}\pi_{k\ell}$, 
  $\omega_n$ is volume of the unit ball. Around points where $\abs{\nabla u}^2\neq 0$, we can also write above as
\[
\int_{B_\sigma}\abs{\nabla d^2(u,Q)}^2d\mu = 4\left[\frac{\abs{\pi}^2(0)}{(n+2)\abs{\nabla u}^2(0)}\sigma^2 + o(\sigma^2)\right]\int_{B_\sigma}\abs{\nabla u}^2d\mu.
\]
\end{lemma}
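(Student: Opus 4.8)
The plan is to compare $d^2(u,Q)$ near the center of $B_\sigma$ with its second--order Taylor polynomial, differentiate and integrate that polynomial explicitly, and control the resulting error in the Dirichlet energy by a reverse Poincar\'e estimate. We may and do take $Q=u(0)$, where $0$ is the center of $B_\sigma$: this is the situation in which the lemma is used (when establishing the mean value inequality at a point $x_0$ one takes $Q=u(x_0)$). Fix normal coordinates at $0$, so that $g_{ij}(0)=\delta_{ij}$ and $d\mu=(1+O(\abs{x}^2))\,dx$. The first and crucial step is the uniform expansion
\[
d^2(u(x),Q)=\pi_{ij}(0)x^ix^j+o(\abs{x}^2)\qquad(x\to 0).
\]
By the metric differentiability results of Korevaar--Schoen and Serbinowski, at almost every base point the directional energy $v\mapsto\abs{u_*v}^2$ is the quadratic form $\pi_{ij}(0)v^iv^j$ (the parallelogram identity being forced by the CAT($1$) bound on $Y$), and the Lipschitz bound on $u$ upgrades the approximate metric differential there to a genuine uniform first--order expansion of $d(u(\cdot),Q)$, hence to the displayed second--order expansion of $d^2(u(\cdot),Q)$.

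Set $f:=d^2(u,Q)$, $h(x):=\pi_{ij}(0)x^ix^j$, and $\psi:=f-h$, so that $\abs{\psi}=o(\sigma^2)$ uniformly on $B_\sigma$. Split
\[
\int_{B_\sigma}\abs{\nabla f}^2\,d\mu=\int_{B_\sigma}\abs{\nabla h}^2\,d\mu+\int_{B_\sigma}\abs{\nabla\psi}^2\,d\mu+2\int_{B_\sigma}\la\nabla h,\nabla\psi\ra\,d\mu.
\]
Integrating the last term by parts and using $\abs{\psi}=o(\sigma^2)$ together with $\abs{\Delta h}=O(1)$ and $\abs{\partial_\nu h}=O(\sigma)$ on $\partial B_\sigma$, it is $o(\sigma^{n+2})$. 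For the middle term one applies a Caccioppoli estimate: for a cutoff $\zeta\equiv1$ on $B_\sigma$, supported in $B_{2\sigma}$, with $\abs{\nabla\zeta}\le C/\sigma$,
\[
\int_{B_\sigma}\abs{\nabla\psi}^2\,d\mu\le4\int_{B_{2\sigma}}\abs{\nabla\zeta}^2\psi^2\,d\mu+2\int_{B_{2\sigma}}\zeta^2\abs{\psi}\,\abs{\Delta\psi}.
\]
Since $u$ is harmonic, Proposition~\ref{target variation} (with $0\le\frac{z\cos z}{\sin z}\le1$ on $[0,\frac\pi2]$) shows $f$ is subharmonic, so $\Delta f$ is a nonnegative Radon measure of total mass $\int_{\partial B_{2\sigma}}\partial_\nu f$ on $B_{2\sigma}$; as $\abs{\nabla f}=2d(u,Q)\abs{\nabla d(u,Q)}=O(\abs{x})$ by the Lipschitz regularity of $u$, this mass is $O(\sigma^n)$, and $\abs{\Delta h}=O(1)$ contributes a further $O(\sigma^n)$. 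With $\abs{\psi}=o(\sigma^2)$ and $\mathrm{Vol}(B_{2\sigma})=O(\sigma^n)$, both terms on the right are $o(\sigma^{n+2})$, hence $\int_{B_\sigma}\abs{\nabla\psi}^2\,d\mu=o(\sigma^{n+2})$.

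It remains to evaluate $\int_{B_\sigma}\abs{\nabla h}^2\,d\mu$. Since $\nabla h=2\pi_{ij}(0)x^j\partial_i$ up to the coordinate error, $\abs{\nabla h}^2=4\,g^{ik}(0)\pi_{ij}(0)\pi_{k\ell}(0)x^jx^\ell+O(\abs{x}^3)$, and by symmetry $\int_{B_\sigma}x^jx^\ell\,d\mu=\frac{\omega_n}{n+2}\sigma^{n+2}\delta^{j\ell}+o(\sigma^{n+2})$ (the constant being $\frac1n\int_{B_\sigma}\abs{x}^2\,dx=\frac1n\cdot\frac{n}{n+2}\omega_n\sigma^{n+2}$). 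Therefore $\int_{B_\sigma}\abs{\nabla h}^2\,d\mu=\frac{4\omega_n}{n+2}\sigma^{n+2}\sum_{i,j}\pi_{ij}(0)^2+o(\sigma^{n+2})=\frac{4\omega_n}{n+2}\abs{\pi}^2(0)\sigma^{n+2}+o(\sigma^{n+2})$, which combined with the previous paragraph gives the first identity. The second identity then follows at once: at a point where $\abs{\nabla u}^2(0)\neq0$ one may take $0$ to be a Lebesgue point of $\abs{\nabla u}^2$, so $\int_{B_\sigma}\abs{\nabla u}^2\,d\mu=\omega_n\abs{\nabla u}^2(0)\sigma^n+o(\sigma^n)$; dividing the first identity by this and rearranging yields the stated form.

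The main obstacle is the uniform $C^0$ expansion of the first paragraph. Approximate metric differentiability only gives this limit outside a set of density zero at $0$, and promoting it to honest uniform convergence is precisely where one uses more than the literal Lipschitz hypothesis of the lemma --- namely the regularity theory for harmonic maps into CAT($1$) spaces, together with the fact that the expansion is needed only at almost every base point. Indeed the identity fails for a general Lipschitz map: for $u(x)=x+x^2\sin(1/x)$ from $\R$ to $\R$ one has $\abs{\pi}^2(0)=1$ but $\int_{-\sigma}^{\sigma}\abs{\nabla d^2(u,0)}^2\,dx\sim4\sigma^3$, whereas $\frac{4\omega_1}{1+2}\abs{\pi}^2(0)\sigma^3=\frac83\sigma^3$. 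So harmonicity is genuinely used both here and, through Proposition~\ref{target variation}, in the Caccioppoli step; everything else is routine.
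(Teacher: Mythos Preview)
Your argument is sound (at almost every base point) but takes a genuinely different route from the paper. The paper never passes through a $C^0$ expansion of $f=d^2(u,Q)$ followed by a Caccioppoli step; instead it obtains the \emph{gradient} of $f$ pointwise via comparison geometry. For each $x\in B_\sigma$ and each direction $v$ it builds a CAT(1) subembedding of the quadrilateral $\{Q,u(x),u(x+\epsilon v),u((1-\epsilon)(x+\epsilon v))\}$ into the round hemisphere (equivalently, up to $O(\sigma^2)$, into $\R^2$), extends it affinely, and reads off
\[
v\cdot\nabla d^2(u,Q)(x)\;=\;2\pi_{ij}(x)\,x^iv^j+e(x)
\]
(first as an inequality coming from the subembedding, then as an equality by also running the argument with $-v$). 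Squaring, averaging over $v\in S^{n-1}$, and integrating over $B_\sigma$ gives the stated asymptotic directly. In particular the paper never touches $\Delta f$ and never invokes Proposition~\ref{target variation}; only the CAT(1) quadrilateral comparison and the Lipschitz bound are used. Your approach trades that piece of comparison geometry for a standard PDE manoeuvre, at the price of needing harmonicity so that $\Delta f$ is a nonnegative Radon measure of mass $O(\sigma^n)$ in the Caccioppoli estimate. What your route buys is that it avoids the somewhat delicate quadrilateral construction and works with nothing beyond metric differentiability and an energy estimate; what the paper's route buys is that it stays within the Lipschitz category as literally stated in the lemma.

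Your closing observation is well taken: at a single prescribed center the identity can indeed fail for a bare Lipschitz map, as your one--dimensional example shows (there $0$ is not a Lebesgue point of $\pi$). Both arguments implicitly use that the center is a Lebesgue point of $\pi$ --- the paper when it replaces $\pi_{ij}(x)$ by $\pi_{ij}(0)$ after integration, you when you invoke the uniform metric differential --- and this holds almost everywhere, which is all the downstream mean value inequality and weak Bochner formula require. One small over--claim in your last paragraph: the uniform $C^0$ expansion itself does not need harmonic regularity; Kirchheim/Korevaar--Schoen metric differentiability already supplies it at a.e.\ base point for any Lipschitz map into any metric space, so in your scheme harmonicity genuinely enters only through the Caccioppoli step.
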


\begin{proof}
For a point $x\in B_\sigma$ and a unit vector $v\in T_{x}M$, we consider a quadrilateral with vertices $u(0)=Q$, $u(x)$, $u(x+\epsilon v)$, and $u((1-\epsilon)(x+\epsilon v))$ in $Y$. We are going to construct a comparison quadrilateral in $\mathbb{S}^2$. Choosing normal coordinates identifies $\mathbb{S}^2\backslash\{p\}$ with $B_\frac{\pi}{2}\subset\mathbb{R}^2$, but with the metric $ds^2 = dr^2 + sin^2(r)d\theta^2$. The Euclidean metric is given by $dr^2+r^2d\theta^2$, and $r^2-\sin^2(r) = O(r^4)$. Since all distances involved are at most $O(\sigma)$, the metric we take on $B_\frac{\pi}{2}$ is $(1+O(\sigma^2))(dx^2+dy^2)$. We'll work for the rest of this argument with the Euclidean metric $dx^2+dy^2$, and the $O(\sigma^2)$ difference will be negligible.

We  first construct a subembedding of the four points $Q = u(0)$, $u(x)$, $u(x+\epsilon v)$, and $u((1-\epsilon)(x+\epsilon v))$. That is, four points $0$, $\tu(x)$, $\tu(x+\epsilon v)$, and $\tu((1-\epsilon)(x+\epsilon v))$ in $\R^2$ so that $d(u(x+\epsilon v),Q)\le \abs{\tu(x+\epsilon v)}$ and $d(u(x),u((1-\epsilon)(x+\epsilon v)))\le\abs{\tu(x)-\tu((1-\epsilon)(x+\epsilon v))}$, while the other four corresponding distances are equal.  We then extend $\tu$ to be an affine map into $\R^2$, so that
\[
\tu(tx+s\epsilon v) = t\tu(x) + s\big(\tu(x+\epsilon v)-\tu(x)\big) + \frac{t-1}{\epsilon}\big((1-\epsilon)\tu(x+\epsilon v) - \tu((1-\epsilon)(x+\epsilon v))\big).
\]

Now we compute:
\begin{eqnarray*}
v\cdot\nabla d^2(u,Q)(x) & = & \lim_{s\to0}\frac{d^2(u(x+s\epsilon v),Q)-d^2(u(x),Q)}{s\epsilon}\\
 & \le & \lim_{s\to0}\frac{\abs{\tu(x+s\epsilon v)}^2-\abs{\tu(x)}^2}{s\epsilon}\\
 & = & 2\tu(x)\cdot\frac{\tu(x+\epsilon v)-\tu(x)}{\epsilon}\\
 & = & 2(x\cdot\nabla\tu(x))\cdot(v\cdot\nabla\tu(x))\\
 & = & \frac{1}{1-\epsilon}\bigg(\abs{x\cdot\nabla\tu(x)}^2 + (1-\epsilon)^2\abs{v\cdot\nabla\tu(x)}^2\\
 & & - \abs{((1-\epsilon)v-x)\cdot\nabla\tu(x)}^2\bigg).\\
\abs{x\cdot\nabla\tu(x)}^2 & = & \abs{\tu(x)}^2\\
 & = & d^2(u(x),Q)\\
 & = & \pi_{ij}(x)x^ix^j + e(x),
 \end{eqnarray*}
where $e(x)$ is the error term, and $\frac{e(x)}{|x|^2}$ can be bounded by a constant  only depending on the energy of the map and domain geometry.

 \begin{eqnarray*}
\abs{v\cdot\nabla\tu(x)}^2 & = & \frac{\abs{\tu(x+\epsilon v)-\tu(x)}^2}{\epsilon^2} + o(1)\\
 & = & \abs{u_*v}^2(x) + o(1).\\
\abs{((1-\epsilon)v-x)\cdot\nabla\tu(x)}^2 & = & \frac{\abs{\tu((1-\epsilon)(x+\epsilon v)) - \tu(x)}^2}{\epsilon^2} + o(1)\\
 & \ge & \abs{u_*((1-\epsilon)v-x)}^2(x) + o(1)\\
 & = & \abs{u_*(v-x)}^2(x) + o(1).
\end{eqnarray*}
Putting all these together, and letting $\epsilon\to 0$, we get
\begin{eqnarray*}
v\cdot\nabla d^2(u,Q)(x) & \le & d^2(u(x),Q) + \abs{u_*v}^2(x) - \abs{u_*(v-x)}^2(x)\\
 & = & 2\pi_{ij}(x)x^iv^j + e(x).
\end{eqnarray*}

If we consider the opposite direction $-v$, the above inequality still holds and it yields 

\[
v\cdot\nabla d^2(u,Q)(x) = 2\pi_{ij}(x)x^iv^j + e(x).
\]
Here we remark that the sign on $e(x)$ does not matter, as this term will be negligibly small upon integration. 

Averaging over the unit sphere of direction vectors at $x$ yields
\[
\abs{\nabla d^2(u,Q)}^2(x) = 4\pi_{ik}(x)\pi_{jk}(x)x^ix^j + e(x).
\]
And integrating over the ball $B_\sigma$ yields
\[
\int_{B_\sigma}\abs{\nabla d^2(u,Q)}^2 = \frac{4\omega_n}{n+2}\abs{\pi}^2(0)\sigma^{n+2} + o(\sigma^{n+2}).
\]
\end{proof}

The remaining computations involve in the monotonicity of frequency function, it is first considered in \cite{gromov-schoen} to study the Lipschitz regularity for harmonic map into a NPC complex. In \cite{freidin}, the first author explores such monotonicity and applies it to obtain a Bochner formula for NPC targets. Since only domain variation will be needed in the following argument,  we outline the sequence of results obtained in\cite{freidin} here for completeness. 

Let $u:(M,g)\to B_{\pi/2}(Q)\subset(Y,d)$ be a harmonic map into a convex neighborhood in some CAT(1) space $(Y,d)$. We will now show the mean value inequality:
\begin{lemma}\label{mean value}
\[
\fint_{B_\sigma(x_0)}\abs{\nabla u}^2d\mu \ge \abs{\nabla u}^2(x_0) + \frac{\la Ric,\pi\ra(x_0) + \abs{\pi}^2(x_0) - \abs{\nabla u}^4(x_0)}{n+2}\sigma^2 + o(\sigma^2).
\]
\end{lemma}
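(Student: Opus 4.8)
The plan is to run the scheme of \cite{freidin} for non-positively curved targets with two substitutions: Proposition~\ref{target variation} in place of the NPC target variation formula, and Lemma~\ref{grad d squared} to supply the $\abs{\pi}^2$ term. The extra factor $\frac{d_Y\cos d_Y}{\sin d_Y}$ in Proposition~\ref{target variation} is exactly what produces the additional $-\abs{\nabla u}^4$ term relative to the NPC Bochner formula. Throughout I would fix $x_0$, set $Q = u(x_0)$, and work in geodesic normal coordinates on $M$ centered at $x_0$, so that $g_{ij}(x) = \delta_{ij} - \tfrac13 R_{ikjl}x^k x^l + O(|x|^3)$ and the Riemannian volume density equals $1 - \tfrac16 R_{kl}x^k x^l + O(|x|^3)$. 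Writing $E(\sigma) = \int_{B_\sigma}\abs{\nabla u}^2\,d\mu$, the asserted mean value inequality amounts, up to the $O(\sigma^2)$ volume-density corrections, to extracting the $\sigma^{n+2}$-coefficient of $E(\sigma)$ and bounding it below by $\tfrac{\omega_n}{n+2}\big(\la Ric,\pi\ra + \abs{\pi}^2 - \abs{\nabla u}^4\big)(x_0)$.

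The first ingredient is the integrated target variation. Since $u$ is locally Lipschitz, $d_Y(u(x),Q) = O(\sigma)$ on $B_\sigma$, hence $\frac{d_Y\cos d_Y}{\sin d_Y} - 1 = -\tfrac13 d_Y^2(u,Q) + O(\sigma^4)$ there. Testing the weak inequality of Proposition~\ref{target variation} against radial cutoffs approximating $\mathbf 1_{B_\sigma}$ and applying the divergence theorem gives, for a.e. $\sigma$,
\[
\frac12\int_{\partial B_\sigma}\partial_\nu d_Y^2(u,Q)\,dS \;\ge\; E(\sigma) \;+\; \frac13\int_{B_\sigma} d_Y^2(u,Q)\big(\abs{\nabla d_Y(u,Q)}^2 - \abs{\nabla u}^2\big)\,d\mu \;+\; o(\sigma^{n+2}).
\]
Into the last integral I would feed $d_Y^2(u(x),Q) = \pi_{ij}(x_0)x^i x^j + o(|x|^2)$ and, using Lemma~\ref{grad d squared}, $\abs{\nabla d_Y(u,Q)}^2(x) = \frac{\abs{\nabla d_Y^2(u,Q)}^2}{4\,d_Y^2(u,Q)} = \frac{\pi_{ik}\pi_{jk}x^i x^j}{\pi_{lm}x^l x^m} + o(1)$; after the cancellation of $\pi_{lm}x^l x^m$, averaging over directions in $B_\sigma$ collapses the integral to $\frac{\omega_n}{3(n+2)}\big(\abs{\pi}^2 - \abs{\nabla u}^4\big)(x_0)\,\sigma^{n+2} + o(\sigma^{n+2})$. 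This is the only step where the positive curvature of the target is used.

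The second ingredient is the domain (inner) variation, which is where harmonicity and the domain curvature enter. Stationarity of the energy under the flow of a compactly supported vector field $X$ gives $\int_M\big(\abs{\nabla u}^2\,\mathrm{div}\,X - 2\pi^{ij}\nabla_i X_j\big)\,d\mu = 0$; taking $X^k = \eta(|x|)\,x^k$ and letting $\eta\uparrow\mathbf 1_{B_\sigma}$ yields a Pohozaev-type identity linking $E(\sigma)$, $\sigma E'(\sigma)$ and $\sigma\int_{\partial B_\sigma}\pi(\nu,\nu)\,dS$, with an $O(\sigma^{n+2})$ remainder carrying the $\la Ric,\pi\ra(x_0)$ contribution through the normal-coordinate corrections to $\mathrm{div}\,X$, $\nabla_i X_j$ and the volume form. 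These are precisely the domain variation computations of \cite{freidin}, which I would cite rather than reproduce. Combining this identity with the integrated target variation above, with the expansion $\int_{B_\sigma}d_Y^2(u,Q)\,d\mu = \frac{\omega_n}{n+2}\abs{\nabla u}^2(x_0)\,\sigma^{n+2} + o(\sigma^{n+2})$, and with the monotonicity of the frequency function $\sigma\mapsto\sigma E(\sigma)/\int_{\partial B_\sigma}d_Y^2(u,Q)\,dS$ from \cite{freidin} --- which controls the higher-order behavior of $E$ and of the boundary integral despite the mere Lipschitz regularity of $u$ --- closes the system and pins the $\sigma^{n+2}$-coefficient of $E(\sigma)$ from below by $\frac{\omega_n}{n+2}\big(\la Ric,\pi\ra + \abs{\pi}^2 - \abs{\nabla u}^4\big)(x_0)$, which is the claim.

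The main obstacle is twofold. Analytically, all the $E'(\sigma)$ terms, the divergence theorems, and the expansion $d_Y^2(u,Q) = \pi_{ij}x^i x^j + o(|x|^2)$ must be justified knowing only that $u$ is Lipschitz; this is exactly where the Korevaar--Schoen regularity and the frequency monotonicity of \cite{freidin} do the real work, since naive higher-order Taylor expansions of $u$ are unavailable. Algebraically, the coefficient $\tfrac13$ appearing in the Taylor expansions of both the spherical factor $\frac{d_Y\cos d_Y}{\sin d_Y}$ and the normal-coordinate metric $g_{ij}$ must combine with the domain variation identity to upgrade to the coefficient $1$ in front of $\abs{\nabla u}^4 - \abs{\pi}^2$ and of $\la Ric,\pi\ra$ in the final inequality; getting these constants to line up is the crux, and it is precisely what the frequency monotonicity enforces.
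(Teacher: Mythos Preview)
Your plan is essentially the paper's own argument, and the ingredients you name (Proposition~\ref{target variation}, Lemma~\ref{grad d squared}, the domain variation and frequency machinery of \cite{freidin}) are exactly the ones used. Two points deserve sharpening, however.

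First, your integrated target variation leaves $\tfrac12\int_{\partial B_\sigma}\partial_\nu d_Y^2(u,Q)\,dS$ on the left, but you never say how this boundary flux is compared to the energy quantities. The paper handles this by rewriting the target variation as $\abs{\nabla u}^2\le\tan d\,\Delta d+\abs{\nabla d}^2$, integrating by parts, and then applying Cauchy--Schwarz together with $\abs{\partial_r d}\le\abs{\partial u/\partial r}$ to obtain $E(\sigma)\le\big[1+\tfrac{\abs{\nabla u}^4-\abs{\pi}^2}{3(n+2)\abs{\nabla u}^2}\sigma^2+o(\sigma^2)\big]\sqrt{A(\sigma)I(\sigma)}$, where $A(\sigma)=\int_{\partial B_\sigma}\abs{\partial u/\partial r}^2$ and $I(\sigma)=\int_{\partial B_\sigma}d_Y^2(u,Q)$. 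Your direct Taylor expansion of $\tfrac{d\cos d}{\sin d}-1$ is an equivalent route to the same inequality, but the Cauchy--Schwarz step against $A$ and $I$ is not optional; without it nothing connects $\int_{\partial B_\sigma}\partial_\nu d_Y^2$ to the Pohozaev identity.

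Second, and more important, you cannot simply cite the frequency monotonicity ``from \cite{freidin}''. In the NPC setting that monotonicity carries only the $\la Ric,\pi\ra$ correction; here it must be re-derived using the CAT(1) inequality above, and doing so inserts a further $\tfrac{\abs{\pi}^2-\abs{\nabla u}^4}{3(n+2)\abs{\nabla u}^2}\sigma^2$ into the expansion of $\sigma E(\sigma)/I(\sigma)$. The paper then chains the two inequalities to get $\sigma A(\sigma)\ge\big[1+\tfrac{2\la Ric,\pi\ra+3\abs{\pi}^2-3\abs{\nabla u}^4}{3(n+2)\abs{\nabla u}^2}\sigma^2+o(\sigma^2)\big]E(\sigma)$; the factor $3$ here is exactly what cancels the $1/3$ and produces the coefficient $1$ in front of $\abs{\pi}^2-\abs{\nabla u}^4$. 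Your final paragraph recognizes that the monotonicity is where the constants line up, but as written your outline uses the target variation only once and would leave you with coefficient $1/3$. The fix is to state explicitly that the CAT(1) target variation enters \emph{three} times: once to bound $E$ by $\sqrt{AI}$, once inside the logarithmic derivative of $\sigma E/I$, and once more when the resulting bound on $\sigma E/I$ is fed back through $\sqrt{AI}$ to control $\sigma A$.
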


\begin{proof}
Let all tensors in the following equations be evaluated at the point $x_0$. For most of these computations, assume $\abs{\nabla u}^2(x_0)\neq0$. Throughout we will use the following functions:
\begin{eqnarray*}
E(\sigma) & = & \int_{B_\sigma}\abs{\nabla u}^2d\mu\\
I(\sigma) & = & \int_{\partial B_\sigma}d^2(u,Q)\,d\Sigma\\
A(\sigma) & = & \int_{\partial B_\sigma}\abs{\frac{\partial u}{\partial r}}^2d\Sigma.
\end{eqnarray*}

Rearrange the statement of Proposition~\ref{target variation} to see
\[
\abs{\nabla u}^2 \le \tan d(u,Q)\Delta d(u,Q) + \abs{\nabla d(u,Q)}^2.
\]
Then integrate this inequality over $B_\sigma$, and use the divergence theorem, the Cauchy-Schwarz inequality, and the triangle inequality (in the form of $\big|\frac{\partial}{\partial r}d(u,Q)\big|\le\abs{\frac{\partial u}{\partial r}}$), to see
\[
E(\sigma) \le \left(A(\sigma)\int_{\partial B_\sigma}\tan^2d(u,Q)d\Sigma\right)^\frac{1}{2} - \int_{B_\sigma}\tan^2d(u,Q)\abs{\nabla d(u,Q)}^2d\mu.
\]

Using the Taylor expansion $\tan^2(t) = t^2 + \frac{2}{3}t^4 + O(t^6)$, and integrating $d^4(u,Q) = \pi_{ij}\pi_{k\ell}x^ix^jx^kx^l + o(\sigma^4)$, we get the following comparison:

\[
\int_{\partial B_\sigma}\tan^2d(u,Q)d\Sigma = \left[1 + \frac{2(2\abs{\pi}^2 + \abs{\nabla u}^4)}{3(n+2)\abs{\nabla u}^2}\sigma^2 + o(\sigma^2)\right]I(\sigma).
\]

This Taylor expansion for $\tan^2(t)$ also lets us replace the expression $\int_{B_\sigma}\tan^2d(u,Q)\abs{\nabla d(u,Q)}^2d\mu$ with $\int_{B_\sigma}d^2(u,Q)\abs{\nabla d(u,Q)}^2d\mu$ for the cost of $O(\sigma^{n+4})$. And we have an expression for $\int_{B_\sigma}d^2(u,Q)\abs{\nabla d(u,Q)}^2d\mu$ from Lemma~\ref{grad d squared}. Hence we have
\begin{eqnarray*}
E(\sigma) & \le & \left[1 + \frac{2\abs{\pi}^2 + \abs{\nabla u}^4}{3(n+2)\abs{\nabla u}^2}\sigma^2 + o(\sigma^2)\right]\Big(A(\sigma)I(\sigma)\Big)^\frac{1}{2}\\
 & & - \left[\frac{\abs{\pi}^2}{(n+2)\abs{\nabla u}^2}\sigma^2 + o(\sigma^2)\right]E(\sigma).
\end{eqnarray*}
Isolating $E(\sigma)$, we find
\begin{equation}\label{EAI}
E(\sigma) \le \left[1 + \frac{\abs{\nabla u}^4 - \abs{\pi}^2}{3(n+2)\abs{\nabla u}^2}\sigma^2 + o(\sigma^2)\right]\Big(A(\sigma)I(\sigma)\Big)^\frac{1}{2}.
\end{equation}

The domain variation formula from \cite{gromov-schoen} was expanded in \cite{freidin} to include the leading order growth near $\sigma = 0$. It says
\begin{align}
(2-n) & \int_{B_\sigma(x_0)} \abs{\nabla u}^2 d\mu + \sigma\int_{\partial B_\sigma(x_0)} \left[ \abs{\nabla u}^2 - 2 \abs{\frac{\partial u}{\partial r}}^2\right] d\Sigma\nonumber\\
 & = \frac{\omega_n\left(2\la Ric,\pi\ra(x_0) - S(x_0)\abs{\nabla u}^2(x_0)\right)}{3(n+2)}\sigma^{n+2} + o(\sigma^{n+2}).\label{domain}
\end{align}
Using this formula, the logarithmic derivative of $E(\sigma)$ is
\begin{eqnarray*}
\frac{E'(\sigma)}{E(\sigma)} & = & \frac{1}{E(\sigma)}\int_{\partial B_\sigma}\abs{\nabla u}^2d\mu\\
 & = & \frac{n-2}{\sigma} + \frac{2A(\sigma)}{E(\sigma)} + \frac{2\la Ric,\pi\ra - S\abs{\nabla u}^2}{3(n+2)\abs{\nabla u}^2}\sigma + o(\sigma).
\end{eqnarray*}

Using the expansion $\sqrt{g}(x) = 1-R_{ij}(0)x^ix^j + O(\abs{x}^3)$ for the volume element in normal coordinates, the logarithmic derivative of $I(\sigma)$ becomes
\begin{eqnarray*}
\frac{I'(\sigma)}{I(\sigma)} & = & \frac{1}{I(\sigma)}\int_{\partial B_\sigma}\frac{\partial}{\partial r}d^2(u,Q)d\Sigma +\frac{n-1}{\sigma}+O(\sigma^2)\\
 & & - \frac{1}{3\sigma I(\sigma)}\int_{\partial B_\sigma}d^2(u,Q)Ric_0(x,x)dS\\
 & = & \frac{n-1}{\sigma} +  \frac{1}{I(\sigma)}\int_{\partial B_\sigma}\frac{\partial}{\partial r}d^2(u,Q)d\Sigma\\
 & & - \frac{2\la Ric,\pi\ra + S\abs{\nabla u}^2}{3(n+2)\abs{\nabla u}^2}\sigma + o(\sigma),
\end{eqnarray*}
where $S$ is the scalar curvature of $M$.

Combining, we get the monotonicity formula from \cite{gromov-schoen}, with the asymptotic behavior near $\sigma = 0$:
\begin{eqnarray}
\frac{d}{d\sigma}\log\left(\frac{\sigma E(\sigma)}{I(\sigma)}\right) & = & \frac{2A(\sigma)}{E(\sigma)} - \frac{1}{I(\sigma)}\int_{\partial B_\sigma}\frac{\partial}{\partial r}d^2(u,Q)d\Sigma + \frac{4\la Ric,\pi\ra}{3(n+2)\abs{\nabla u}^2}\sigma + o(\sigma)\nonumber\\
 & \ge & \frac{2A(\sigma)}{E(\sigma)} - \frac{2}{I(\sigma)}\Big(A(\sigma)I(\sigma)\Big)^\frac{1}{2} + \frac{4\la Ric,\pi\ra}{3(n+2)\abs{\nabla u}^2}\sigma + o(\sigma)\nonumber\\
 & \ge & 2\left[1+\frac{\abs{\pi}^2-\abs{\nabla u}^4}{3(n+2)\abs{\nabla u}^2}\sigma^2 + o(\sigma^2)\right]\sqrt{\frac{A(\sigma)}{I(\sigma)}} - 2\sqrt{\frac{A(\sigma)}{I(\sigma)}}\nonumber\\
 & & + \frac{4\la Ric,\pi\ra}{3(n+2)\abs{\nabla u}^2}\sigma + o(\sigma)\nonumber\\
 & = & \frac{4\la Ric,\pi\ra + 2\abs{\pi}^2 - 2\abs{\nabla u}^4}{3(n+2)\abs{\nabla u}^2}\sigma + o(\sigma).\nonumber\\
\frac{\sigma E(\sigma)}{I(\sigma)} & \ge & 1 + \frac{2\la Ric,\pi\ra + \abs{\pi}^2 - \abs{\nabla u}^4}{3(n+2)\abs{\nabla u}^2}\sigma^2 + o(\sigma^2).\label{order}
\end{eqnarray}
Here we have used the facts that $\frac{A(\sigma)}{I(\sigma)} = \sigma^2 + o(\sigma^2)$ and that $\lim_{\sigma\to0}\frac{\sigma E(\sigma)}{I(\sigma)} = 1$ at points where $\abs{\nabla u}^2\neq 0$.

Now combining \eqref{EAI} and \eqref{order}, we have
\begin{eqnarray}
\sigma\Big(A(\sigma)I(\sigma)\Big)^\frac{1}{2} & \ge & \left[1 + \frac{\abs{\pi}^2 - \abs{\nabla u}^4}{3(n+2)\abs{\nabla u}^2}\sigma^2 + o(\sigma^2)\right]\sigma E(\sigma)\nonumber\\
 & \ge & \left[1 + \frac{2\la Ric,\pi\ra + 2\abs{\pi}^2 - 2\abs{\nabla u}^4}{3(n+2)\abs{\nabla u}^2}\sigma^2 + o(\sigma^2)\right]I(\sigma).\nonumber\\
\sigma A(\sigma) & \ge & \left[1 + \frac{2\la Ric,\pi\ra + 2\abs{\pi}^2 - 2\abs{\nabla u}^4}{3(n+2)\abs{\nabla u}^2}\sigma^2 + o(\sigma^2)\right]\Big(A(\sigma)I(\sigma)\Big)^\frac{1}{2}\nonumber\\
 & \ge & \left[1 + \frac{2\la Ric,\pi\ra + 3\abs{\pi}^2 - 3\abs{\nabla u}^4}{3(n+2)\abs{\nabla u}^2}\sigma^2 + o(\sigma^2)\right]E(\sigma)\label{A vs I}
\end{eqnarray}

Finally, to prove the mean value inequality, we first observe that it is trivially true at points where $\abs{\nabla u}^2 = 0$. At the other points, the above calculations allow us to manipulate the derivative of the mean value of $\abs{\nabla u}^2$. We will write $V_\sigma$ for $Vol(B_\sigma(0))$, and first we need the Bishop-Gromov volume comparison, \cite[Proposition 6]{freidin}, which states
\begin{equation}\label{bishop}
Vol(\partial B_\sigma) = \left(\frac{n}{\sigma} - \frac{S}{3(n+2)}\sigma + O(\sigma^2)\right)V_\sigma.
\end{equation}
Now we compute:
\begin{align*}
\frac{d}{d\sigma}\fint_{B_\sigma}\abs{\nabla u}^2d\mu & = \frac{1}{V_\sigma^2}\left(V_\sigma\int_{\partial B_\sigma}\abs{\nabla u}^2d\Sigma - Vol(\partial B_\sigma)E(\sigma)\right) & \\
 & = \frac{2}{\sigma V_\sigma}\left[\sigma A(\sigma) + \left(-1+\frac{\la Ric,\pi\ra}{3(n+2)\abs{\nabla u}^2}\sigma^2 + o(\sigma^2)\right)E(\sigma)\right] & \text{by \eqref{domain} and \eqref{bishop}}\\
 & \ge \frac{2}{\sigma V_\sigma}\left(\frac{\la Ric,\pi\ra + \abs{\pi}^2 - \abs{\nabla u}^4}{(n+2)\abs{\nabla u}^2}\sigma^2 + o(\sigma^2)\right)E(\sigma) & \text{by \eqref{A vs I}}\\
\frac{d}{d\sigma}\log\fint_{B_\sigma}\abs{\nabla u}^2d\mu & \ge 2\frac{\la Ric,\pi\ra + \abs{\pi}^2 - \abs{\nabla u}^2}{(n+2)\abs{\nabla u}^2}\sigma + o(\sigma) & \\
\fint_{B_\sigma}\abs{\nabla u}^2d\mu & \ge \abs{\nabla u}^2 + \frac{\la Ric,\pi\ra + \abs{\pi}^2 - \abs{\nabla u}^2}{n+2}\sigma^2 + o(\sigma^2) & 
\end{align*}
\end{proof}

Finally we can deduce from the mean value inequality the Bochner formula.

\begin{proof2}
In \cite[Proposition 23]{freidin}, it is shown that if $f\in L^\infty$ satisfies for almost every $x_0$ the integral inequality
\[
\fint_{B_\sigma(x_0)}fd\mu \ge f(x_0) + \phi(x_0)\sigma^2 + o(\sigma^2),
\]
then $f$ satisfies the weak differential inequality
\[
\frac{1}{2}\Delta f \ge (n+2)\phi.
\]
Apply this to the integral inequality of Lemma~\ref{mean value} to conclude
\[
\frac{1}{2}\Delta\abs{\nabla u}^2 \ge \la Ric,\pi\ra + \abs{\pi}^2 - \abs{\nabla u}^4.
\]
\end{proof2}

\section*{Acknowledgements} The work contained here composed a part of the first author's thesis at Brown University. The authors would like to thank Chikako Mese for her generous support and useful discussions. The first author would like to thank Christine Breiner for encouragement and support, and the second author would like to thank Huai-Dong Cao and Xiaofeng Sun for their continual encouragement.

\end{document}